\def\Xint#1{\mathchoice
{\XXint\displaystyle\textstyle{#1}}%
{\XXint\textstyle\scriptstyle{#1}}%
{\XXint\scriptstyle\scriptscriptstyle{#1}}%
{\XXint\scriptscriptstyle\scriptscriptstyle{#1}}%
\!\int}
\def\XXint#1#2#3{{\setbox0=\hbox{$#1{#2#3}{\int}$ }
\vcenter{\hbox{$#2#3$ }}\kern-.6\wd0}}
\def\dashint{\Xint-}
\def\fint{\dashint}
      \newtheorem{theorem}{Theorem}[section]
      \newtheorem{remark}[theorem]{Remark}
      \newtheorem{definition}[theorem]{Definition}
      \newtheorem{lemma}[theorem]{Lemma}
      \newcommand{\ct}[1]{\langle {#1}\rangle \lower.3ex\hbox{$_{t}$}}
      \newcommand{\lt}[1]{[ {#1}] \lower.3ex\hbox{$_{t}$}}
\newcommand{\RR}{\mathbb{R}}
\begin{document}

\title[Singularities of Nonlinear Elliptic Systems]{Singularities of Nonlinear Elliptic Systems}
\author{David R. Adams}
\address{Department of Mathematics, University of Kentucky, Lexington, KY 40506-0027}
\email{dave@ms.uky.edu}

\author{Jie Xiao}
\address{Department of Mathematics and Statistics, Memorial University of Newfoundland, St. John's, NL A1C 5S7, Canada}
\email{jxiao@mun.ca}
\thanks{JX was supported in part by NSERC of Canada.}

\subjclass[2010]{35J48, 42B37}

\date{}
\dedicatory{Dedicated to N. G. Meyers on the occasion of his 80th
birthday}

\keywords{}

\begin{abstract}
  Through Morrey's spaces (plus Zorko's spaces) and their potentials/capacities as well as Hausdorff contents/dimensions, this paper estimates the singular sets of nonlinear elliptic systems of the even-ordered Meyers-Elcrat type and a class of quadratic functionals inducing harmonic maps.
\end{abstract}
\maketitle

\tableofcontents

\section{Introduction}\label{s1}
\setcounter{equation}{0}

In \cite{M1938}, C. B. Morrey discovered a condition satisfied by the first derivatives of weak solutions to certain quasilinear second order systems of elliptic partial differential equations (pde) in domains (connected open sets) $\Omega\subseteq\mathbb R^n$ that implied everywhere $C^\alpha$ = H\"older continuity (of exponent $\alpha$) of the solutions throughout $\Omega$, when $n=2$. His condition -- now known as the ``Morrey condition" -- is:
\begin{equation}\label{e11}
\fint_{B_r(x_0)}\Big|\big(\frac{\partial}{\partial x}\big)^m u\Big|^p\le C r^{-\lambda}
\end{equation}
for all open balls $B_r(x_0)=\{x\in\mathbb R^n:\ |x-x_0|<r\}\subseteq\Omega$; $0<\lambda\le n, 1\le p<\infty, m\in\mathbb N$ (for derivatives of order $m$), and $C$ is a positive constant and $\fint_{E}$ stands for the integral average over $E$ with respect to the Lebesgue measure. Thus was born Morrey's Lemma:
$$
(\ref{e11})\Longrightarrow u\in C^\alpha\quad\hbox{for}\quad \alpha=m-\frac{\lambda}{p}>0.
$$
Notice that in the Sobolev theory with merely $(\partial/\partial x)^mu\in L^{p}(\Omega)$, i.e., $\lambda=n$, one generally needs $m-{n}/{p}>0$ to achieve H\"older continuity. Thus a significant gain is achieved from the Morrey condition. And we will henceforth say that a function $f$ is a Morrey class function on a domain $\Omega$ if it satisfies (\ref{e11}) with $f$ replacing $(\partial/\partial x)^mu$. Furthermore, we will say that these functions belong to the Morrey space $L^{p,\lambda}(\Omega)$.

Some what later, De Giorgi in \cite{DG1968}, gave an explicit example of a system of elliptic pde that could develop internal singularities provided the dimension of the underlying space exceeds two. This example, often quoted, shows that
\begin{equation}\label{e12}
u(x)=(u^1(x),...,u^n(x))=\frac{x}{|x|^\gamma}=\frac{(x_1,...,x_n)}{\big(\sum_{k=1}^n x_k^2\big)^{\gamma/2}}
\end{equation}
is a $W^{1,2}$ (Sobolev space)-solution of
\begin{equation}\label{e13}
-\big((a_{ij}^{kl}(x, u) u_{x_i}^{k}\big)_{x_j} = 0\ \ \forall\ \ l = 1,
\ldots, n
\end{equation}
(summation convention) with
$$
\gamma=\frac{n}{2}\Big(1-1/\sqrt{4(n-1)^2+1}\Big)
$$
and
$$
a_{ij}^{kl}(x, u) = \delta_{ij}\delta_{kl} +
  (c\,\delta_{ik} + d \,b_{ik}(x, u))(c\,\delta_{jl} + d\, b_{jl}(x, u))
$$
where $c,d$ are two positive constants; $c=n-2$, $d=n$ and
$$
b_{ik}(x,u) = \frac{x_{i}x_{k}}{|\,x|^2}
$$
in the De Giorgi case. Then, soon after, Giusti-Miranda \cite{GM1968} followed with the $u$ of (\ref{e12}) a solution of (\ref{e13}) with $\gamma=1, c=1, d=4/(n-2)$, and
$$
b_{ik}(x,u) = \frac{u^{i}u^{k}}{1 + |u|^2}.
$$
And more recently, Koshelev \cite{K1995} has refined the De Giorgi example by showing that again (\ref{e12}) solves (\ref{e13}) with
$$
\gamma=1,\quad c=(n-1)^{-1/2}\left(1+\frac{(n-2)^2}{n-1}\right)^{-1/4},\quad d=\frac{c+c^{-1}}{n-2}.
$$
Furthermore, Koshelev's example is extremal in a certain sense; see \cite[Chapter 8]{BF2002}.

In this paper, we  wish to revisit this question of the size of the singular set for such (higher order) systems, specifically the Meyers-Elcrat system \cite{ME1975} and then make some observations concerning some other nonlinear systems, e.g. the harmonic map system \cite{BF2002, LW2008}. But, the main point we wish to make here, is that a fundamental principal regarding the Morrey theory has gone unnoticed up to now: the Morrey condition can also be used to say something about weak solutions when one is operating below the continuity threshold, i.e., $m-{\lambda}/{p}\le 0$, $0<\lambda\le n$. Our results show that one can gain as much as $n-\lambda$ off the apriori dimension estimates of the singular sets in the Morrey case vs. the Sobolev case. And thus with coefficients of the pde satisfying additional regularity -- e.g. uniform continuity away from the singular set -- then one achieves the so-called partial regularity: the singular set is relatively closed in $\Omega$ and the solution is regular in the compliment (say $C^\alpha$ or even $C^\infty$, as in the harmonic map system case).

As mentioned, our main study will be the Meyers-Elcrat system of $2m$-th order quasilinear elliptic equations -- given below. However, a comment about our methods should be given here. The underlying Morrey theory need comes from a series of papers by the authors \cite{AX2004, AX2011a, AX2011b, AX2011c}, and in particular, from the estimates on the capacities associated with potentials of functions in the Morrey space $L^{p,\lambda}$, i.e., Riesz potentials
$I_\alpha f(x)=\int |x-y|^{\alpha-n}f(y)$, where generally $f$ has compact support, $0<\alpha<n$, $n\ge 3$, and the integral is taken with respect to the $n$-dimensional Lebesgue measure. This is a natural extension of the nonlinear potential theory of \cite{AH1996} where $I_\alpha f$ plays a central role but for $f\in L^p=L^{p,n}$.

\section{The results}\label{s2}
\setcounter{equation}{0}

One of the main reasons that makes the Meyers-Elcrat system distinctive is that every $W^{m,p}$-solution $u$ has a ``reverse H\"older" exponent $q>p$, i.e., $(\partial/\partial x)^m u\in L^{q}$ on $\Omega$. This idea originated from the $2^{nd}$ order case treated earlier by Meyers \cite{M1963}, but in this 1975 paper of Meyers-Elcrat, they rely on a device discovered by Ghering \cite{G1973} for derivatives of quasi-conformal maps, a device that has since been made into a force in regularity theory for nonlinear elliptic equations by Giaquinta-Giusti \cite{GG1982}. Normally, however, one can not expect to get H\"older continuity from reverse H\"older, though an increase in integrability exponent of solutions often helps, i.e., $u\in L^r$ on $\Omega$, for some $r>np/(n-mp)$ = Sobolev exponent, $mp<n$. Thus, one generally gets H\"older continuity of solutions only when the reverse H\"older exponent $q$ is sufficiently large and/or the Morrey exponent $\lambda$ is sufficiently small.

The Meyers-Elcrat system is:

\begin{equation}\label{e21}
  \sum_{|\gamma| \leq m} (-1)^{|\gamma|} \left(\frac{\partial}{\partial x}\right)^{\gamma} A_{\gamma}(x, D^{m}u) =
  0\quad\hbox{on}\quad\Omega,
\end{equation}
where
$$
A_{\gamma}: \Omega \times \mathbb R^{N} \rightarrow \mathbb R^N,\quad N=\sum_{k=1}^m n^k,
$$
is a Caratheodory function and
\begin{equation}\label{e22}
\begin{cases}
\sum_{|\gamma|\le m} A_{\gamma}(x, D^{m}u)
\left(\frac{\partial}{\partial x}\right)^{\gamma} u\geq a_{0}
\left|\left(\frac{\partial}{\partial x}\right)^{m}
u\right|^{p}, & \hbox{a.e.\ on}\ \Omega;\\
|A_{\gamma}(x, D^{m}u)| \leq M
\left|\left(\frac{\partial}{\partial x}\right)^{m} u\right|^{p -
1},\ \ |\gamma| \leq m, & \hbox{a.e.\ on}\ \Omega.
\end{cases}
\end{equation}
Here, $p\in (1,\infty)$, $a_0$ and $M$ are positive constants, and
$$
\begin{cases}
D^{m}u =\left\{ \left(\frac{\partial}{\partial x}\right)^{\gamma}u:
|\gamma| \leq m\right\};\\
\left(\frac{\partial}{\partial
x}\right)^{m} u=\left\{
  \left(\frac{\partial}{\partial x}\right)^{\gamma}u: |\gamma| = m\right\};\\
\left(\frac{\partial}{\partial x}\right)^{\gamma} = \left(\frac{\partial}{\partial x_1}\right)^{\gamma_1} \ldots \left(\frac{\partial}{\partial x_n}\right)^{\gamma_n};\\
\gamma =(\gamma_1, \ldots, \gamma_n)\in\mathbb N^n;\\
|\gamma| =\gamma_1+ \cdots + \gamma_n.
\end{cases}
$$
Our main result is:

\begin{theorem}\label{t1} Let $\Omega\subset\mathbb R^n$ be a bounded domain. If $u$ is a $W^{m,p}\cap L^q$-solution of (\ref{e21})-(\ref{e22}) with $q>np/(n-mp)$, then $|D^mu|$ belongs to $L^{p,\lambda}(\Omega)$ with $\lambda=(m+n/q)p<n$ and consequently the singular set
$$
\Sigma_{\hat{p}}(u,\Omega)=\mathsf{S}_{\hat{p}}(u,\Omega)\cup \mathsf{T}(u,\Omega)
$$
has Hausdorff dimension $\le np/q$. Here $\hat{p}$ equals $1$ or $p$, and
$$
\begin{cases}
\mathsf{S}_{\hat{p}}(u,\Omega)=\left\{x_0\in \Omega: \limsup_{r \to 0}
\fint_{B_r(x_0)}\big|u -\fint_{B_r(x_0)}u\big|^{\hat{p}}> 0 \right\};\\
\mathsf{T}(u,\Omega)=\left\{x_0\in\Omega: \sup_{r>0}\big|\fint_{B_r(x_0)\subseteq\Omega}u\big|=\infty\right\}.
\end{cases}
$$
Furthermore, when $q=\infty$, i.e., bounded solutions, then the singular sets have Hausdorff dimension zero, matching the examples of Giusti-Miranda and Koshelev where bounded isolated point singularities can occur. Further, if the coefficients are regular away from the singular set, then they are isolated points; see \cite[Chapter IX]{G1983}.
\end{theorem}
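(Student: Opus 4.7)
The plan is to split the argument into two main stages: first establishing the Morrey regularity $|D^mu|\in L^{p,\lambda}(\Omega)$, then translating this into Hausdorff-dimension bounds on $\Sigma_{\hat p}(u,\Omega)$ via a capacity-based analysis of the Riesz potential $I_m(|D^mu|)$.

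For the first stage, I would begin from the standard Caccioppoli-type inequality for the Meyers-Elcrat system, obtained by testing (\ref{e21}) with $\eta^{2m}u$ for a smooth cutoff $\eta$ supported in $B_R(x_0)\Subset\Omega$ with $\eta\equiv 1$ on $B_{R/2}(x_0)$: the lower bound in (\ref{e22}) supplies coercivity of the left side, while the upper bound controls the cross terms after Young's inequality, giving
$$\int_{B_{R/2}(x_0)}\Big|\Big(\tfrac{\partial}{\partial x}\Big)^m u\Big|^p\,dx\le \frac{C}{R^{mp}}\int_{B_R(x_0)}|u|^p\,dx.$$
Since $u\in L^q(\Omega)$, H\"older's inequality applied on the right yields $\int_{B_R(x_0)}|u|^p\,dx\le C\,R^{n(1-p/q)}\|u\|_{L^q(\Omega)}^p$, and substitution gives exactly the Morrey bound (\ref{e11}) for $|D^m u|$ with $\lambda=mp+np/q=(m+n/q)p$. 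The hypothesis $q>np/(n-mp)$ is equivalent to $\lambda<n$, so $|D^mu|\in L^{p,\lambda}(\Omega)$.

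For the second stage I would invoke the pointwise estimate, valid for $W^{m,p}$-functions via integration against the Taylor-remainder kernel,
$$|u(x)-P_{x_0,R}(x)|\le C\,I_m\big(|D^mu|\chi_{B_R(x_0)}\big)(x),\qquad x\in B_R(x_0),$$
where $P_{x_0,R}$ is the polynomial of degree $<m$ best-approximating $u$ on $B_R(x_0)$. Consequently, at any point $x_0$ where the Riesz potential $I_m(|D^mu|)$ is finite in a quantitative (Morrey-Lebesgue) sense, both the limsup defining $\mathsf S_{\hat p}(u,\Omega)$ and the supremum defining $\mathsf T(u,\Omega)$ vanish or remain bounded, so $\Sigma_{\hat p}(u,\Omega)$ is contained in the singular set of $I_m(|D^mu|)$. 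The dimension of this exceptional set is then addressed via the Morrey-capacity theory developed in \cite{AX2004,AX2011a,AX2011b,AX2011c}: for $f\in L^{p,\lambda}$ with $mp<\lambda$, the singular set of $I_m f$ has zero $(m,p,\lambda)$-Morrey capacity, and this capacity in turn dominates the Hausdorff measure of dimension $\lambda-mp$. Applied to $f=|D^mu|$ this gives $\dim_{\mathcal H}\Sigma_{\hat p}(u,\Omega)\le\lambda-mp=np/q$ for $\hat p=1$; the case $\hat p=p$ follows by the same route after one absorbs an extra $L^{p+\varepsilon}$-factor produced by the Meyers-Elcrat-Gehring reverse H\"older inequality. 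When $q=\infty$ we have $\lambda=mp$ and the dimension bound degenerates to zero; combined with the assumed regularity of the coefficients, the classical partial-regularity argument in \cite[Chapter IX]{G1983} upgrades this to isolated singularities, matching the Giusti-Miranda and Koshelev examples.

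The main technical hurdle is precisely this last step --- passing from $L^{p,\lambda}$-integrability of $|D^mu|$ to a Hausdorff-dimension estimate for the exceptional set of $I_m(|D^mu|)$. This is where the Morrey-capacity machinery of the authors' prior work is essential, and where the subtle differences between the Sobolev ($\lambda=n$) and genuinely Morrey ($\lambda<n$) regimes must be tracked carefully. By contrast, the Caccioppoli inequality and Taylor-remainder representation are essentially routine adaptations of tools in \cite{ME1975,GG1982,G1983}.
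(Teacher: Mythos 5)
Your first stage matches the paper's: a Caccioppoli estimate obtained by testing (\ref{e21}) with $\eta^{mp}u$, Young's inequality and Gagliardo--Nirenberg to absorb the intermediate terms, and then H\"older to convert $u\in L^q$ into the decay $\int_{B_R}|u|^p\lesssim R^{n(1-p/q)}$, yielding $|D^mu|\in L^{p,(m+n/q)p}(\Omega)$. You make the H\"older step explicit where the paper leaves it implicit; that part is sound.

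The second stage has a genuine gap, and it sits exactly at the point you yourself flag as the delicate one. You assert that the singular set of $I_m f$ (with $f=|D^mu|\in L^{p,\lambda}$) has zero $C_m(\cdot;L^{p,\lambda})$-capacity. The mollification/weak-type argument that normally proves such a null-capacity statement requires $\|f-f_\epsilon\|_{L^{p,\lambda}(\Omega)}\to 0$, but smooth (or compactly supported continuous) functions are \emph{not} dense in $L^{p,\lambda}$, so that convergence fails in general. The paper's essential device is Lemma~\ref{l1}: on a bounded domain, $L^{p,\lambda}(\Omega)\subset L^{p,\mu}_0(\Omega)$ (the Zorko space) for every $\mu>\lambda$, so mollifiers converge only in the strictly weaker Morrey norms $L^{p,\mu}$. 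Accordingly one proves $C_m(\Sigma;L^{p,\mu}(\Omega))=0$ for every $\mu\in(\lambda,n]$, feeds this into the isocapacitary Lemma~\ref{l2} to obtain $\Lambda_d^{(\infty)}(\Sigma)=0$ for all $d>\mu-mp$, and only then lets $\mu\downarrow\lambda$ to conclude $\hbox{dim}_H(\Sigma)\le\lambda-mp=np/q$. As written, your mollification route stalls before reaching the capacity-null conclusion. Separately, you base the reduction on the local Taylor-remainder estimate $|u-P_{x_0,R}|\lesssim I_m(|D^mu|\chi_{B_R})$ with $P_{x_0,R}$ a polynomial of degree $<m$, but $\mathsf{S}_{\hat p}$ and $\mathsf{T}$ are defined by subtracting the \emph{mean} $\fint_{B_r}u$, not a polynomial; for $m\ge 2$ one needs an additional argument that the polynomial-oscillation exceptional set controls the mean-oscillation set. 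The paper sidesteps this by the global sharp-Moser representation $u^l\approx c\,I_m(f_{m,l})$, so that Theorem~\ref{t4} (phrased directly in terms of mean oscillation of $I_\alpha f$) applies, and then handles $\hat p=p$ through the reverse-H\"older--powered maximal capacity estimate (\ref{e35}). Your plan correctly identifies the reverse H\"older ingredient, but fills neither the Zorko-space nor the polynomial-versus-mean gap.
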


Next, we notice that our methods can be applied to getting estimates of the singular sets for a class of minima for certain quadratic functionals. These functionals with summation convention take the form
\begin{equation}\label{e23}
\mathcal{J}(u,\Omega)=\int_\Omega A^{kl}_{ij}(x,u) u^i_{x_k} u^j_{x_l}
\end{equation}
with
$$
\begin{cases}
{\hbox{symmetry}}:\ A^{kl}_{ij}(x,u)=A^{lk}_{ji}(x,u);\\
{\hbox{boundedness}}:\ |A^{kl}_{ij}(x,u)|\le M\ \hbox{for some constant}\ M>0;\\
{\hbox{ellipticity}}:\ A^{kl}_{ij}(x,u)\xi_k^i\xi_l^j\ge a_0|\xi|^2\ \hbox{for some constant}\ a_0>0;\\
{\hbox{H\"older coefficients}}:\ \frac{|A^{kl}_{ij}(x,z)-A^{kl}_{ij}(x',z')|}{|(x-x',z-z')|^\beta}\lesssim 1\ \hbox{for some constant}\ \beta\in (0,1);\\
{\hbox{splitting coefficients}}:\ A^{kl}_{ij}(x,u)=g_{ij}(x,u)G^{kl}(x),
\end{cases}
$$
where $\mathsf{X}\lesssim\mathsf{Y}$ stands for $\mathsf{X}\le c\mathsf{Y}$ for a constant $c>0$. Our result is:

\begin{theorem}\label{t2} Let $\Omega\subset\mathbb R^n$ be a bounded domain. If $u$ is a bounded $W^{1,2}$-minimizer for the functional $\mathcal{J}(u,\Omega)$ with $A^{kl}_{ij}$ satisfying the above, then $|\nabla u|\in L^{2,2}(B)$ for any $B\Subset\Omega$, and consequently all local singular sets (in the sense of \cite{HL1990}, say) of such minimizers have Hausdorff dimension zero. Thus if $\mathcal{J}(\cdot,\cdot)$ is the energy functional for harmonic maps into a compact Riemannian manifold with smooth coefficients, then the solutions (the minimizing harmonic maps) have only sets of isolated points as local singular sets.
\end{theorem}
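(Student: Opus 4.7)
The strategy mirrors Theorem \ref{t1}: first establish that $|\nabla u|$ lies in the local Morrey space $L^{2,2}$, then feed this into the Morrey-Riesz potential/capacity machinery of \cite{AX2004,AX2011a,AX2011b,AX2011c} to bound the Hausdorff dimension of the Lebesgue-point singular set. With the parameters $m=1$, $p=2$, $\lambda=2$, the Meyers-Elcrat dimension count $\lambda-mp$ of Theorem \ref{t1} collapses to $0$, which is the desired conclusion.

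The central new step is a Caccioppoli-Morrey estimate. Fix $B_r(x_0)\Subset\Omega$, choose $\zeta\in C_c^\infty(B_r(x_0))$ with $\zeta\equiv 1$ on $B_{r/2}(x_0)$ and $|\nabla\zeta|\lesssim 1/r$, let $c=\fint_{B_r(x_0)}u$, and consider the admissible competitor
$$
v=u-\zeta^2(u-c)=(1-\zeta^2)u+\zeta^2 c,
$$
which agrees with $u$ near $\partial B_r(x_0)$ and satisfies $\|v\|_{L^\infty}\le\|u\|_{L^\infty}$. Minimality gives $\mathcal{J}(u,B_r(x_0))\le\mathcal{J}(v,B_r(x_0))$. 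Expanding $\nabla v=(1-\zeta^2)\nabla u-2\zeta(u-c)\nabla\zeta$ and using the lower bound $a_0$ on $\int\zeta^2 A\nabla u\cdot\nabla u$ together with the upper bound $M$ on the $v$-side and the splitting structure $A^{kl}_{ij}(x,u)=g_{ij}(x,u)G^{kl}(x)$ (so that the Hölder difference $g_{ij}(x,u)-g_{ij}(x,v)$ and the cross term $\zeta(u-c)\nabla\zeta\cdot\nabla u$ can be absorbed via Young's inequality and the bound $|u-c|\le 2\|u\|_{L^\infty}$ on $\operatorname{supp}\zeta$), I arrive at the standard
$$
\int_{B_{r/2}(x_0)}|\nabla u|^2\lesssim\frac{1}{r^2}\int_{B_r(x_0)}|u-c|^2\lesssim\|u\|_{L^\infty}^2\,r^{n-2}.
$$
This is precisely the Morrey condition $|\nabla u|\in L^{2,2}(B)$ for every $B\Subset\Omega$.

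With this Morrey inclusion in hand, represent $u$ locally as $u=I_1 f+(\text{smooth})$ with $f\in L^{2,2}$ and apply the Morrey capacity / Hausdorff content equivalences of \cite{AX2011a,AX2011b} exactly as in the proof of Theorem \ref{t1}. This forces the local analogues of $\mathsf{S}_1(u,\Omega)\cup\mathsf{T}(u,\Omega)$ to have Hausdorff dimension $\le\lambda-p=0$. Under the assumed Hölder coefficient hypothesis, the classical $\varepsilon$-regularity theorem for $\mathcal{J}$-minimizers (cf. \cite{G1983,HL1990}) asserts that smallness of $r^{2-n}\int_{B_r(x_0)}|\nabla u|^2$ implies smoothness of $u$ near $x_0$; hence the analytic singular set is contained in the Lebesgue-point singular set and inherits the dimension-zero bound. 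For minimizing harmonic maps into a compact smooth target, the tangent-map analysis of \cite{HL1990} then upgrades ``dimension zero'' to ``isolated points''.

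The main obstacle lies in the first step: making the Caccioppoli computation robust against the $u$-dependent coefficients. The splitting hypothesis together with Hölder regularity in $(x,z)$ is tailored exactly so that the error $g_{ij}(x,u)-g_{ij}(x,v)$ is absorbable on the $|\nabla u|^2$ side, and the $L^\infty$ bound on $u$ is the crucial ingredient that converts the generic Caccioppoli estimate into the sharp $r^{n-2}$ Morrey decay — without it one would only recover an $L^{2,\lambda}$ bound with $\lambda$ strictly less than $2$, losing the sharp dimension. Once the Morrey inclusion is secured, the Hausdorff dimension conclusion is a direct transcription of the potential-theoretic machinery already developed for Theorem \ref{t1}.
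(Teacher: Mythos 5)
Your proposal for establishing $|\nabla u|\in L^{2,2}$ has a genuine gap. The comparison map $v=u-\zeta^2(u-c)$ satisfies $\nabla v=(1-\zeta^2)\nabla u-2\zeta(u-c)\nabla\zeta$, so $\nabla v$ vanishes on $B_{r/2}(x_0)$ but on the annulus $B_r\setminus B_{r/2}$ it still contains the full-strength term $(1-\zeta^2)\nabla u$. Minimality then yields
$$
a_0\int_{B_{r/2}}|\nabla u|^2\le\int_{B_r\setminus B_{r/2}}\bigl(A(x,v)\nabla v\cdot\nabla v-A(x,u)\nabla u\cdot\nabla u\bigr),
$$
and even after invoking the H\"older and splitting structure of $A$ and the $L^\infty$ bound on $u$, the right-hand side is only controlled by $C\int_{B_r\setminus B_{r/2}}|\nabla u|^2+Cr^{-2}\int_{B_r}|u-c|^2$. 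The $|\nabla u|^2$ term in the annulus cannot be absorbed directly; one must ``hole-fill'' to obtain $\int_{B_{r/2}}|\nabla u|^2\le\theta\int_{B_r}|\nabla u|^2+C r^{n-2}$ with $\theta=C/(1+C)<1$, and iterating this recursion only yields $\int_{B_s}|\nabla u|^2\lesssim s^{\min(\mu,\,n-2)}$ with $\mu=\log_2(1/\theta)$, which is strictly less than $n-2$ whenever $M/a_0$ is large. So your argument proves $|\nabla u|\in L^{2,\mu}$ for some $\mu\in(0,2]$, not the sharp $L^{2,2}$ that the theorem requires. This matters: the sharp exponent is exactly what collapses the Hausdorff dimension bound to zero, and with $\mu<2$ you only get $\dim_H\le 2-\mu>0$.

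The paper's proof supplies the missing ingredient: the monotonicity formula of Giaquinta--Giusti \cite{GG1984}, namely that
$$
t\mapsto\Phi(t,\tau,r,x_0)=t^{2-n}e^{\tau t^\beta}\int_{B_{tr}(x_0)}A^{kl}_{ij}(x,u)\,u^i_{x_k}u^j_{x_l}
$$
is increasing in $t$ for a suitable $\tau$, a fact whose proof crucially exploits both the splitting hypothesis $A^{kl}_{ij}=g_{ij}G^{kl}$ and the H\"older coefficients. Monotonicity immediately yields $s^{2-n}\int_{B_s}|\nabla u|^2\lesssim r^{2-n}\int_{B_r}|\nabla u|^2$ for all $s<r$, hence the sharp $L^{2,2}$ bound. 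This is not a Caccioppoli estimate and cannot be replaced by one. A secondary difference is the ``dimension zero $\Rightarrow$ isolated points'' step: the paper gives an explicit blow-up argument using the oscillation estimate \eqref{e39}--\eqref{e310} derived from the same monotonicity to show that a non-isolated singularity would force the blow-up limit to be radially invariant and hence singular along a whole segment, contradicting the dimension-zero bound; your appeal to the tangent-map analysis of \cite{HL1990} is in the right spirit but is not a self-contained replacement for this. The potential-theoretic step (Morrey inclusion $\Rightarrow$ dimension bound via the machinery of Theorem~\ref{t1}) you describe correctly and it matches the paper.
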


Nevertheless, the result of Theorem \ref{t2} applies only to the so-called local or isolated singular sets because one cannot ``localize" a singular set of a minimizing harmonic map that extend to the boundary of $\Omega$ (as in \cite{HL1990} -- more about this below). This applies to the singularities that arise, for example, with energy minimizing maps that are independent of a variable:
$u(x,y)={x}/{|x|}$ with $(x,y)\in\mathbb R^3\times\mathbb R^{n-3}$. Such a $u$ is a minimizer for the harmonic map system with pde
$$
-\Delta_g u=A(u)\langle \nabla u,\nabla u\rangle
$$
with quadratic growth on the right side; see \cite{LW2008}. Now the fact that such solutions satisfy
$$
\sup_{B_r(x_0)\Subset\Omega}r^2\fint_{B_r(x_0)}\Big|\big(\frac{\partial}{\partial x}u\big)\Big|^2<\infty
$$
is well known -- it just follows from the ``monotone inequality" for minimizing harmonic maps -- see \cite[Chapter IX]{G1983} or \cite{LW2008} -- because $u$ is in fact a $W^{1,2}$-solution. Thus one can achieve a singular set as large as dimension $n-3$, the maximum allowable for minimizing harmonic maps. Hence for minimizing harmonic maps, all local or isolated singular sets consist of just isolated points, as in \cite{HL1990}.

Our final result again relates to minimizing harmonic maps and their singular sets.

\begin{theorem}\label{t3} Let $\mathbb B^n$ and $\mathbb S^{m-1}$ are the unit ball of $\mathbb R^n$ and the unit sphere of $\mathbb R^m$. If $u=(u^1,...,u^m)$ is a minimizing harmonic map from $\mathbb B^n$ to $\mathbb S^{m-1}$ and $\hbox{sing}(u,\mathbb B^n)$ is the set of all discontinuous points of $u$, then
\begin{equation}\label{eqF}
\hbox{sing}(u,\mathbb B^n)=\left\{x\in\mathbb B^n:\ I_1\Big(\Big|\big(\frac{\partial}{\partial x}\big)u\Big|\Big)(x)=\infty\right\},
\end{equation}
  namely, we are saying that bounded singularities of $u$ correspond to unbounded discontinuities of the $1$-Riesz potential of the Hilbert-Schmidt norm $|\big({\partial}/{\partial x}\big)u|$ of $\big({\partial}/{\partial x}\big)u$ determined by:
$$
\Big|\big(\frac{\partial}{\partial x}\big)u\Big|^2=\sum_{i=1}^m\sum_{j=1}^n\Big|\big(\frac{\partial}{\partial x_j}\big)u^i\Big|^2.
$$
\end{theorem}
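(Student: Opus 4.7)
\textbf{Plan for Theorem \ref{t3}.} The plan is to establish the two inclusions separately, the easy inclusion $\{I_1(|\nabla u|)=\infty\}\subseteq\hbox{sing}(u,\mathbb{B}^n)$ by contrapositive, and the reverse inclusion via the classical Schoen--Uhlenbeck tangent map machinery for sphere-valued minimizing harmonic maps.

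For the easy direction, I would suppose $x_0\notin\hbox{sing}(u,\mathbb{B}^n)$. Then by the partial regularity theorem for sphere-valued minimizing harmonic maps, $u$ is smooth on some ball $B_r(x_0)\subset\mathbb{B}^n$, so $|\nabla u|$ is bounded there. Splitting the Riesz potential into its pieces over $B_r(x_0)$ and over $\mathbb{B}^n\setminus B_r(x_0)$, the first is bounded by $\|\nabla u\|_{L^\infty(B_r(x_0))}\int_{B_r(x_0)}|x_0-y|^{1-n}\,dy=Cr$, and the second by $r^{1-n}\|\nabla u\|_{L^1(\mathbb{B}^n)}$, which is finite since $|\nabla u|\in L^2(\mathbb{B}^n)\subset L^1(\mathbb{B}^n)$ on the bounded domain.

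For the reverse inclusion, I would fix $x_0\in\hbox{sing}(u,\mathbb{B}^n)$ and introduce the rescalings $u_r(y)=u(x_0+ry)$. The Schoen--Uhlenbeck $\varepsilon$-regularity theorem forces $r^{2-n}\int_{B_r(x_0)}|\nabla u|^2\ge\varepsilon_0>0$ for all small $r$, and the standard compactness/monotonicity machinery produces a subsequence $u_{r_k}\to\phi$ strongly in $W^{1,2}_{\rm loc}(\mathbb{R}^n)$, where $\phi:\mathbb{R}^n\to\mathbb{S}^{m-1}$ is a non-constant, $0$-homogeneous minimizing tangent map. The change of variables $z=x_0+ry$ yields the scale-invariant identity
\begin{equation*}
\int_{B_r(x_0)}|x_0-z|^{1-n}|\nabla u(z)|\,dz=\int_{B_1}|y|^{1-n}|\nabla u_r(y)|\,dy.
\end{equation*}
The $0$-homogeneity of $\phi$ gives $|\nabla\phi(y)|=|\nabla_\sigma\phi(y/|y|)|/|y|$, and polar coordinates yield
\begin{equation*}
\int_{B_1}|y|^{1-n}|\nabla\phi(y)|\,dy=\Bigl(\int_{\mathbb{S}^{n-1}}|\nabla_\sigma\phi|\,d\sigma\Bigr)\cdot\int_0^1\frac{d\rho}{\rho}=\infty,
\end{equation*}
the angular integral being strictly positive because $\phi$ is non-constant on $\mathbb{S}^{n-1}$. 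Extracting a further subsequence so that $|\nabla u_{r_k}|\to|\nabla\phi|$ a.e.\ on $B_1$ and applying Fatou's lemma to the non-negative kernel $|y|^{1-n}$ then forces
\begin{equation*}
I_1(|\nabla u|)(x_0)\ge\int_{B_{r_k}(x_0)}|x_0-z|^{1-n}|\nabla u(z)|\,dz\longrightarrow\infty.
\end{equation*}

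The main obstacle is not a single delicate estimate but rather organizing the input from the tangent map theory: (i) the quantitative lower energy density bound $r^{2-n}\int_{B_r(x_0)}|\nabla u|^2\ge\varepsilon_0$ at singular points; (ii) strong $W^{1,2}_{\rm loc}$ convergence of a subsequence of blowups; and (iii) the $0$-homogeneity together with non-constancy of the resulting tangent map. With these classical facts assembled, what remains is simply the one-line scaling identity above combined with Fatou's lemma.
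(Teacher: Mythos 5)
Your proposal is correct and follows essentially the same route as the paper: both directions rest on blowing up at a singular point to a non-constant, zero-homogeneous tangent map together with the scale invariance of $\int_{B_r(x_0)}|x_0-z|^{1-n}|\nabla u(z)|\,dz$, which yields a logarithmically divergent integral, while smoothness of $u$ off $\hbox{sing}(u,\mathbb B^n)$ gives finiteness of the potential at regular points. Your use of strong $W^{1,2}_{\rm loc}$ convergence of the rescalings plus Fatou's lemma (and a generic tangent map with the polar-coordinate computation) is in fact a slightly more careful rendering of the paper's two-way evaluation of $\lim_{\rho\to 0}I_\rho(z,y)$, which tacitly interchanges limit and integral and invokes Simon's cylindrical form $\phi_0(\hat{x}/|\hat{x}|)$ of the tangent map.
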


These last results on minimizing harmonic maps are in sharp contrast to the singular set results that can occur in the Yamabe problem: $-\Delta u=u^\frac{n+2}{n-2}$, $u\ge 0$, in $\Omega\subset\mathbb R^n$, $n\ge 3$. Here, it has been shown that the largest singular set one can have here, has dimension $(n-2)/2$ and this can be realized. And on the other hand one can also have local singular sets of the Cantor type along a line in the complement of where the solution is regular; see \cite{SY1988}. And these sets can have dimension positive and arbitrarily small! This can not happen for minimizing harmonic maps.

\section{The proofs}\label{s3}
\setcounter{equation}{0}

\subsection{Three definitions} We need concepts of the so-called Zorko space, Hausdorff capacity/dimension, and Morrey capacity.

\begin{definition}\label{d1} Given a domain $\Omega\subseteq\mathbb R^n$ and $1<p<\infty, 0<\lambda\le n$, each Morrey  space $L^{p,\lambda}$ on $\Omega$ is equipped with the following norm
$$
\|f\|_{L^{p,\lambda}(\Omega)}=\left(\sup_{B_r(x_0)\subseteq\Omega}r^\lambda\fint_{B_r(x_0)}|f|^p\right)^\frac1p.
$$
We say $f\in L^{p,\lambda}_0(\Omega)$ (the Zorko space
\cite{Z1986}) whenever $f$ can be approximated by
$C_0^1(\Omega)$-functions in the norm $\|\cdot\|_{L^{p,\lambda}(\Omega)}$.
\end{definition}

Each Morrey space has its own capacity.

\begin{definition}\label{d2} For a domain $\Omega\subseteq\mathbb R^n$, $1<p<\infty, 0<\lambda\le n$, $0<\alpha<n$ and $E\subseteq\Omega$, let
$$
C_\alpha(E;L^{p,\lambda}(\Omega))=\inf\{\|f\|_{L^{p,\lambda}(\Omega)}^p:\
0\le f\in L^{p,\lambda}(\Omega)\ \ \&\ \ I_\alpha f\ge 1_E\},
$$
where $1_E$ stands for the characteristic function of $E$.
\end{definition}

According to \cite[Theorem 5.3]{AX2004}, we know that if $B_r(x_0)\subseteq\Omega$ converges to $x_0$ then
$$
C_\alpha(B_r(x_0);L^{p,\lambda}(\Omega))\approx\begin{cases}
                              r^{\lambda-\alpha p},  &   1<p<\lambda/\alpha; \\
                              (-\ln r)^{-p},  &   1<p=\lambda/\alpha. \\
                          \end{cases}
$$
Here and later on, ${\mathsf X}\approx{\mathsf Y}$ represents that there exists a constant
$c>0$ such that $c^{-1}{\mathsf Y}\le{\mathsf X}\le c{\mathsf Y}$.

\begin{definition}\label{d3} The classical $(0,n]\ni d$-dimensional Hausdorff capacity of a set $E\subset\mathbb R^n$ is defined via:
$$
\Lambda_d^{(\infty)}(E)=\inf\sum_j r_j^d,
$$
where the infimum is taken over all countable coverings of $E$ by balls $B_{r_j}(\cdot)$. Moreover, the
Hausdorff dimension of $E$ is decided by
$$
\hbox{dim}_H(E)=\inf\{d:\ \Lambda_d^{(\infty)}(E)=0\}.
$$
\end{definition}

\subsection{Two lemmas} Our first lemma indicates that each Morrey space is actually
embedded into the intersection of a family of the Zorko spaces:
\begin{lemma}\label{l1} Let $\Omega\subset\mathbb R^n$ be a bounded domain. Then
\begin{equation}\label{e32}
L^{p,\lambda}_0(\Omega)\subset L^{p,\lambda}(\Omega)\subset\cap_{\lambda<\mu<
n}L^{p,\mu}_0(\Omega).
\end{equation}
\end{lemma}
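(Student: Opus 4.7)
The first inclusion is immediate from Definition \ref{d1}: $L^{p,\lambda}_0(\Omega)$ is by construction the $L^{p,\lambda}(\Omega)$-closure of $C_0^1(\Omega)$. The substance lies in the second inclusion; fix $f\in L^{p,\lambda}(\Omega)$ and $\mu\in(\lambda,n)$, and the plan is to produce a sequence $g_k\in C_0^1(\Omega)$ with $\|f-g_k\|_{L^{p,\mu}(\Omega)}\to 0$ by the classical truncate-plus-mollify scheme, exploiting the two strict gaps $\lambda<\mu$ and $\mu<n$ in an essential way.

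For each $k\in\NN$ I pick a cutoff $\eta_k\in C_c^\infty(\Omega)$ with $0\le\eta_k\le 1$ and $\eta_k=1$ on $\Omega_{2/k}=\{x\in\Omega:\mathrm{dist}(x,\partial\Omega)>2/k\}$, and a standard mollifier $\phi_{\varepsilon_k}\in C_c^\infty(\RR^n)$ supported in $B_{\varepsilon_k}(0)$ with $\varepsilon_k<1/(2k)$; then $g_k:=\phi_{\varepsilon_k}*(\eta_k f)\in C_0^\infty(\Omega)$. Two auxiliary facts will drive the estimate: (a) applying Jensen's inequality pointwise to $|\phi_{\varepsilon_k}*(\eta_k f)|^p$ and then Fubini gives $\|g_k\|_{L^{p,\lambda}(\Omega)}\lesssim\|f\|_{L^{p,\lambda}(\Omega)}$ uniformly in $k$; (b) $\|f-g_k\|_{L^p(\Omega)}\to 0$ by the standard mollifier theorem combined with dominated convergence on the shrinking boundary strip $\Omega\setminus\Omega_{2/k}$ (using $f\in L^p(\Omega)$, which is built into the Morrey class on the bounded domain $\Omega$, after at most an extra magnitude truncation $f\chi_{\{|f|\le k\}}$).

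The main estimate splits the defining supremum at a threshold $r_0\in(0,1)$. For $B_r(x_0)\subseteq\Omega$ with $r\le r_0$, the triangle inequality together with (a) yields
\begin{equation*}
r^\mu\fint_{B_r(x_0)}|f-g_k|^p \;\le\; r^{\mu-\lambda}\cdot 2^{p-1}\bigl(\|f\|_{L^{p,\lambda}(\Omega)}^p+\|g_k\|_{L^{p,\lambda}(\Omega)}^p\bigr) \;\lesssim\; r_0^{\mu-\lambda}\|f\|_{L^{p,\lambda}(\Omega)}^p,
\end{equation*}
which, because $\mu-\lambda>0$, is arbitrarily small uniformly in $k$ once $r_0$ is chosen small. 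For $B_r(x_0)\subseteq\Omega$ with $r>r_0$, the inequality $\mu<n$ gives $r^{\mu-n}\le r_0^{\mu-n}$, whence
\begin{equation*}
r^\mu\fint_{B_r(x_0)}|f-g_k|^p \;\le\; c_n\, r_0^{\mu-n}\, \|f-g_k\|_{L^p(\Omega)}^p,
\end{equation*}
which tends to $0$ as $k\to\infty$ by (b). Passing to the supremum closes the argument.

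The main obstacle I anticipate is fact (b): a ball $B_r(x_0)\subseteq\Omega$ with large $r$ can still reach arbitrarily close to $\partial\Omega$, so the $L^p$-convergence must hold on the full bounded domain rather than on compact subsets. The boundary cutoff $\eta_k$ (possibly paired with the magnitude truncation $f\chi_{\{|f|\le k\}}$) is precisely what absorbs this: the bad term $f(1-\eta_k)$ is supported on the thin strip $\Omega\setminus\Omega_{2/k}$, vanishes a.e.\ as $k\to\infty$, and is dominated by $|f|^p\in L^1(\Omega)$, so it tends to $0$ in $L^p(\Omega)$ by dominated convergence, after which the standard mollifier theorem finishes (b).
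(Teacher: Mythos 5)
Your proof is correct and rests on the same two pillars as the paper's: (i) the mollified approximants are uniformly bounded in the $L^{p,\lambda}$-norm, and (ii) they converge to $f$ in $L^p(\Omega)$. What differs is how these two facts are combined. The paper derives a clean Hölder factorization: setting $q=(n-\lambda)/(n-\mu)$ so that $(\lambda-n)/q=\mu-n$, it writes
$$r^{\mu-n}\int_{B_r}|f-f_\epsilon|^p=\Big(r^{\lambda-n}\int_{B_r}|f-f_\epsilon|^p\Big)^{1/q}\Big(\int_{B_r}|f-f_\epsilon|^p\Big)^{1/q'},$$
yielding the interpolation inequality $\|f-f_\epsilon\|_{L^{p,\mu}}\le\|f-f_\epsilon\|_{L^{p,\lambda}}^{1/q}\,\|f-f_\epsilon\|_{L^p(B_R)}^{1/q'}$, in which the first factor is bounded and the second tends to zero. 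You instead split the defining supremum at a radius $r_0$: small balls are killed by the prefactor $r^{\mu-\lambda}$ and the uniform $L^{p,\lambda}$ bound (exploiting $\mu>\lambda$), large balls by the $L^p$ convergence and the prefactor $r_0^{\mu-n}$ (exploiting $\mu<n$). This threshold argument is the elementary, ``un-optimized'' version of the paper's one-line interpolation; both work, and you correctly identify that the two strict inequalities $\lambda<\mu<n$ are each used exactly once. One genuine improvement on your side is the boundary cutoff $\eta_k$: the paper simply extends $f$ by zero and mollifies, but the resulting $f_\epsilon$ is then not compactly supported in $\Omega$, so it is not literally a $C_0^1(\Omega)$ test function as Definition \ref{d1} of the Zorko space requires. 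Your truncate-then-mollify scheme closes that gap. (The extra magnitude truncation $f\chi_{\{|f|\le k\}}$ you hedge on is indeed unnecessary, since $f\in L^p(\Omega)$ already on the bounded domain and $(1-\eta_k)f\to 0$ in $L^p$ by dominated convergence.)
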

\begin{proof} The first inclusion of (\ref{e32}) follows from Definition \ref{d1}. To validate the second inclusion in (\ref{e32}), via setting $f=0$ outside $\Omega$, we may assume that $f$ is in $L^{p,\lambda}(\mathbb R^n)$ = the Morrey space $L^{p,\lambda}(\Omega)$ with $\Omega$ replaced by $\mathbb R^n$, and then $f_\epsilon$ is the $\epsilon$-mollifier of $f$, i.e.,
$$
f_\epsilon(x)=\phi_\epsilon\ast f(x)=\int_{\mathbb R^n}\epsilon^{-n}\phi(\epsilon^{-1}y)f(x-y)
$$
where
$$
\phi\in C^\infty_0(\mathbb R^n);\ 0\le\phi\le 1;\ \int_{\mathbb R^n}\phi=1;\ \phi_\epsilon(x)=\epsilon^{-n}\phi(x/\epsilon).
$$
For $\lambda<\mu<n$ let $q=(n-\lambda)/(n-\mu)$ and $q'=q/(q-1)$. Then
$$
r^{\mu-n}\int_{B_r(x_0)}|f-f_\epsilon|^p=\left(r^{\lambda-n}\int_{B_r(x_0)}|f-f_\epsilon|^p\right)^\frac1q\left(\int_{B_r(x_0)}|f-f_\epsilon|^p\right)^\frac{1}{q'}
$$
and hence
$$
\|f-f_\epsilon\|_{L^{p,\mu}(\Omega)}\le\|f-f_\epsilon\|_{L^{p,\lambda}(\Omega)}^\frac1q\|f-f_\epsilon\|_{L^{p}(B_R(0))}^\frac1{q'}
$$
for some large finite $R>0$. Note that $f_\epsilon\to f$ in
$L^p_{loc}$ but at best $f_\epsilon$ is bounded in
$L^{p,\mu}$; see also Zorko \cite{Z1986}. Therefore, $f\in
L^{p,\mu}_0(\Omega)$.
\end{proof}

The analysis on Page 1649 of \cite{AX2004} gives that if
$1<p<\lambda/\alpha$ and $E\subset B_R(x_0)\subset\Omega$ then
\begin{equation*}\label{e1}
\Lambda_{n}^{(\infty)}(E)^\frac{\lambda-\alpha p}{\lambda}\lesssim
\frac{C_\alpha(E; L^{p,\lambda}(\Omega))}{R^\frac{(\lambda-\alpha p)(\lambda-n)}{\lambda}}\ \ \&\ \ C_\alpha(E; L^{p,\lambda}(\Omega))\lesssim\Lambda_{\lambda-\alpha
p}^{(\infty)}(E).
\end{equation*}
Geometrically speaking, the last estimates are rough isocapacitary inequalities for the Morrey capacity and the Hausdorff capacity. But, they can be improved to the following Morrey-Hausdorff isocapacitary inequalities extending the well-known result for $\lambda=n$; see also \cite{AH1996}.

\begin{lemma}\label{l2} Let $\Omega\subset\mathbb R^n$ be a bounded domain, $0<\alpha,\lambda<n$, $0\le\lambda-\alpha p<d\le n$ and $E\subseteq\Omega$.

\item{\rm(i)} If $1<p<\lambda/\alpha$ and $0<q<dp/(\lambda-\alpha p)$, then
$$
\Lambda_{d}^{(\infty)}(E)\lesssim C_\alpha\big(E;
L^{p,\lambda}(\Omega)\big)^\frac qp.
$$
\item{\rm(ii)} If $1<p=\lambda/\alpha$ and $0<q\le 1$, then there is a constant
$c>0$ such that
$$
\Lambda_{d}^{(\infty)}(E)\lesssim\exp\Big(-c C_\alpha\big(E;
L^{p,\lambda}(\Omega)\big)^\frac qp\Big)\quad\forall\ d\in (0,n].
$$
\end{lemma}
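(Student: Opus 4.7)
The plan is to prove both parts from a pointwise Hedberg-type estimate for $I_\alpha$ on $L^{p,\lambda}(\Omega)$, which reduces the estimate on $E$ to a Vitali-covering argument on a superlevel set of the Hardy--Littlewood maximal function $Mf$. Take any admissible $f\ge 0$ for the capacity, so $I_\alpha f\ge 1_E$ and $\|f\|_{L^{p,\lambda}(\Omega)}^p$ is within $\epsilon$ of $C_\alpha(E;L^{p,\lambda}(\Omega))$. Splitting $I_\alpha f(x)$ at a free scale $\rho>0$ into a near and a far part and dyadically summing, the near part is bounded by $C\rho^\alpha Mf(x)$, while the far part is controlled via H\"older's inequality together with the Morrey condition to give $C\rho^{\alpha-\lambda/p}\|f\|_{L^{p,\lambda}(\Omega)}$, the geometric series converging precisely because $p<\lambda/\alpha$. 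Optimizing in $\rho$ yields the Morrey--Hedberg inequality
\begin{equation*}
I_\alpha f(x)\le C\|f\|_{L^{p,\lambda}(\Omega)}^{\alpha p/\lambda}Mf(x)^{1-\alpha p/\lambda},
\end{equation*}
so $I_\alpha f\ge 1$ on $E$ forces $E\subset\{Mf>\sigma\}$ with $\sigma\approx\|f\|_{L^{p,\lambda}(\Omega)}^{-\alpha p/(\lambda-\alpha p)}$.

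Next I apply the $5r$-Vitali covering lemma on $\{Mf>\sigma\}$ to produce a disjoint family $\{B_{r_i}(x_i)\}$ whose fivefold dilates cover this set and which satisfy the mean-value bound $\sigma^p r_i^n\le C\int_{B_{r_i}(x_i)}f^p$. Reapplying H\"older's inequality together with the Morrey condition yields the uniform radius bound $r_i\le R_0:=C\|f\|_{L^{p,\lambda}(\Omega)}^{p/(\lambda-\alpha p)}$, and one notes that $R_0^d=C(\|f\|_{L^{p,\lambda}(\Omega)}^p)^{d/(\lambda-\alpha p)}$ already matches the endpoint exponent in (i). The final step is then
\begin{equation*}
\Lambda_d^{(\infty)}(E)\le 5^d\sum_i r_i^d=5^d\sum_i r_i^{d-(\lambda-\alpha p)}r_i^{\lambda-\alpha p}\le 5^dR_0^{d-(\lambda-\alpha p)}\sum_i r_i^{\lambda-\alpha p},
\end{equation*}
where the hypothesis $d>\lambda-\alpha p$ is exactly what legitimizes the monotone estimate on the first factor. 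The residual sum $\sum_i r_i^{\lambda-\alpha p}$ is controlled by the disjointness of the $B_{r_i}(x_i)$ together with the continuous embedding $L^{p,\lambda}(\Omega)\hookrightarrow L^p(\Omega)$ on the bounded domain $\Omega$, and the strict inequality $q<dp/(\lambda-\alpha p)$ provides the margin that absorbs the loss from this rough global embedding, yielding $\Lambda_d^{(\infty)}(E)\le C\|f\|_{L^{p,\lambda}(\Omega)}^q$. Taking the infimum over admissible $f$ then proves (i).

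For (ii) the borderline identity $\alpha p=\lambda$ replaces the geometric-series estimate in the far part by a logarithmic one, so the resulting pointwise inequality has Moser--Trudinger flavor of the form
\begin{equation*}
I_\alpha f(x)\le C\|f\|_{L^{p,\lambda}(\Omega)}\Big(1+\log^+\big(Mf(x)/\|f\|_{L^{p,\lambda}(\Omega)}\big)\Big)^{1/p'}.
\end{equation*}
Now $I_\alpha f\ge 1$ on $E$ forces $Mf(x)\ge\exp\big(c\|f\|_{L^{p,\lambda}(\Omega)}^{-p'}\big)$ there, and feeding this exponentially large threshold into the same Vitali/Morrey machinery yields a Hausdorff-content estimate that decays exponentially in $C_\alpha(E;L^{p,\lambda}(\Omega))^{q/p}$ for every $q\in(0,1]$; because exponential decay dominates any polynomial rate, the bound holds for every $d\in(0,n]$.

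The main obstacle I anticipate is the summation $\sum_i r_i^d$ in paragraph two: unlike the classical $L^p$ case $\lambda=n$, disjoint balls in the genuine Morrey regime cannot be aggregated into a single clean global identity, so one is forced to rely on the lossy embedding into $L^p(\Omega)$ on the bounded domain, and the strict inequality on $q$ is what makes up for this loss by opening the geometric factor $R_0^{d-(\lambda-\alpha p)}$ whose exponent can be tuned freely.
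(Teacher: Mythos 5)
There is a genuine gap in the Vitali-covering step, and it is precisely the point your own closing paragraph flags as a worry. From the disjoint Vitali family $\{B_{r_i}(x_i)\}$ covering $\{Mf>\sigma\}$, disjointness together with the global embedding $L^{p,\lambda}(\Omega)\hookrightarrow L^p(\Omega)$ controls only $\sum_i r_i^n$, namely $\sigma^p\sum_i r_i^n\lesssim\int_\Omega f^p\lesssim \|f\|_{L^{p,\lambda}(\Omega)}^p$. It does \emph{not} control $\sum_i r_i^{\lambda-\alpha p}$, because for $\beta=\lambda-\alpha p<n$ small radii contribute more to $\sum_i r_i^\beta$ than to $\sum_i r_i^n$, and you have no lower bound on the $r_i$ (equivalently, no upper bound on the number of balls at small scales). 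The uniform radius bound $r_i\le R_0$ goes the wrong way for the interpolation $r_i^d=r_i^{d-\beta}r_i^\beta$ when you want to trade down to a lower power, and the strict inequality $q<dp/(\lambda-\alpha p)$ gives no extra traction here: it is needed for the finiteness of the trace constant, not for a geometric summation of a lossy cover. What your Hedberg/Vitali argument \emph{does} give, after using $\sum_i r_i^n$, is $\Lambda_n^{(\infty)}(E)\lesssim\|f\|_{L^{p,\lambda}(\Omega)}^{\lambda p/(\lambda-\alpha p)}$ (up to domain constants), i.e. the ``rough'' isocapacitary inequality displayed in the paper just before Lemma \ref{l2}. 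But the content of Lemma \ref{l2}(i) is precisely the improvement from $\Lambda_n^{(\infty)}$ to $\Lambda_d^{(\infty)}$ for every $d>\lambda-\alpha p$, which this machinery alone cannot reach. Part (ii) inherits the same difficulty since it rests on the same Vitali aggregation.

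The paper avoids the summation issue entirely by dualizing Hausdorff content. It uses the Frostman-type representation $\Lambda_d^{(\infty)}(E)\approx\sup_\nu\nu(E)$ over nonnegative Borel measures with $\nu(B_r(x_0))\lesssim r^d$ (from \cite{Ad}), together with the trace inequality $\sup_{\|f\|_{L^{p,\lambda}(\Omega)}\le 1}\int_\Omega|I_\alpha f|^q\,d\nu<\infty$ for such $\nu$ when $q<dp/(\lambda-\alpha p)$ (from \cite{AX2011b}). Then for any admissible $f$ with $I_\alpha f\ge 1_E$ one has $\nu(E)\le\int_E|I_\alpha f|^q\,d\nu\lesssim\|f\|_{L^{p,\lambda}(\Omega)}^q$, and taking the supremum over $\nu$ and infimum over $f$ gives the result directly; the exponential case follows the same scheme with the Moser--Trudinger-type trace inequality. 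The upshot is that the quantitative input you need is not a pointwise Hedberg bound plus a covering lemma, but the measure-theoretic trace estimate; if you want to complete your approach you would essentially have to re-derive that trace estimate, at which point the duality argument is the shorter path.
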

\begin{proof} On the one hand, suppose $\nu$ is a non-negative Borel measure on $\mathbb R^n$ obeying
\begin{equation*}\label{eqMeaL}
   \sup_{(r,x_0)\in (0,\infty)\times\mathbb R^n}\frac{\nu(B_r(x_0))}{r^d}<\infty\ \ \hbox{for}\ \
n\ge d >\lambda - \alpha \,p\ge 0.
\end{equation*}
According to \cite[Theorem 3.1]{AX2011b} (cf. \cite{AX2011a}), we have:

\noindent {\rm(i)} If $1<p<\frac{\lambda}{\alpha}$ and
$0<\lambda<n$, then
\begin{equation*}\label{eqTr32aL}
\sup_{\|f\|_{L^{p,\lambda}(\Omega)}\le1}\int_{\Omega}
|I_{\alpha}f|^{q} \, d\nu<\infty\quad\hbox{for}\quad
q<\dfrac{dp}{\lambda -\alpha p}
\end{equation*}
and
\begin{equation*}\label{eqTrIm32aL}
\sup_{\|f\|_{L^{p,\lambda}(\Omega)}\le1}\int_{\Omega}
\frac{|I_{\alpha}f|^{\tilde{p}}}{[\ln(1+|I_\alpha f|)]^\gamma} \,
d\nu<\infty\quad\hbox{for}\quad\tilde{p}=\dfrac{dp}{\lambda -\alpha
p}\ \&\ \gamma>2.
\end{equation*}

\noindent {\rm(ii)} If $1<p=\frac{\lambda}{\alpha}$ and
$0<\lambda\le n$, then there exists a constant $c>0$ such that
\begin{equation*}\label{eqTr2aa}
\sup_{\|f\|_{L^{p,\lambda}(\Omega)}\le 1}\int_{\Omega}
\exp\big(c|I_{\alpha}f|^{q}\big) \, d\nu<\infty
\end{equation*}
holds for $(\lambda,q)\in (0,n)\times (0,1]$ or
$(\lambda,q)=\Big(n,\frac{n}{n-1}\Big)$.

On the other hand, \cite[Corollary]{Ad} tells us that
under $d\in (0,n]$, one has
$$
\Lambda_{d}^{(\infty)}(E)\approx \sup_{\nu}\nu(E),
$$
where the ``sup" is taken over all non-negative Borel measures $\nu$
on $\mathbb R^n$ with
$$
\sup_{(r,x_0)\in (0,\infty)\times\RR^N}\frac{\nu(B_r(x_0))}{r^d}<\infty.
$$

So, the above-recalled facts, plus the definition of
$C_\alpha(E;L^{p,\lambda}(\Omega))$, derive the iso-capacitary
estimates in Lemma \ref{l2}.
\end{proof}

\subsection{One more theorem}  The following singularity result for the Morrey potentials $I_\alpha L^{p,\lambda}(\Omega)$ will be used later on.

\begin{theorem}\label{t4} Let $\Omega\subset\mathbb R^n$ be a bounded domain and $f\in L^{p,\lambda}(\Omega)$.

\item{\rm(i)} If $1<p<\lambda/\alpha<\mu/\alpha\le n/\alpha$, then
$$
C_\alpha\big(\Sigma_1(I_\alpha f,\Omega);
L^{p,\mu}(\Omega)\big)=0\quad\&\quad
\hbox{dim}_H\big(\Sigma_1(I_\alpha f,\Omega)\big)\le
\lambda-\alpha p
$$

\item{\rm(ii)} If $1<p=\lambda/\alpha<\mu/\alpha\le n/\alpha$, then
$$
C_\alpha\big(\Sigma_1(I_\alpha f,\Omega);
L^{p,\mu}(\Omega)\big)=0\quad\&\quad
\hbox{dim}_H\big(\Sigma_1(I_\alpha f,\Omega)\big)=0.
$$
\end{theorem}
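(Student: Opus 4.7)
The plan is to reduce Theorem \ref{t4} to a Morrey-capacitary quasi-continuity statement for $I_\alpha f$, and then transfer the resulting capacitary vanishing into a Hausdorff dimension bound via Lemma \ref{l2}. Cases (i) and (ii) are handled uniformly; the only distinction is how close $\mu$ can be pushed to $\lambda$ subject to $p<\mu/\alpha$.

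First I would fix $\mu$ with $\lambda<\mu\le n$ (so $p<\mu/\alpha$ in both cases). By Lemma \ref{l1}, $f\in L^{p,\mu}_0(\Omega)$, so I pick $g_k\in C_0^1(\Omega)$ with $\|f-g_k\|_{L^{p,\mu}(\Omega)}^p\le 2^{-k(p+1)}$; each $I_\alpha g_k$ is continuous on $\mathbb R^n$. Definition \ref{d2} immediately delivers the Morrey-capacitary weak-type inequality
$$
C_\alpha\bigl(\{x\in\Omega:I_\alpha|h|(x)>t\};L^{p,\mu}(\Omega)\bigr)\le t^{-p}\,\|h\|_{L^{p,\mu}(\Omega)}^p
$$
for every $h\in L^{p,\mu}(\Omega)$ and $t>0$, simply by declaring $|h|/t$ admissible. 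Applied to $h_k=f-g_k$ at height $t=2^{-k}$ this yields the summable bound $C_\alpha(\{|I_\alpha h_k|>2^{-k}\};L^{p,\mu})\le 2^{-k}$. Using countable subadditivity of $C_\alpha(\cdot;L^{p,\mu})$ — take $(\sum_j f_j^p)^{1/p}$ admissible for $\bigcup_j E_j$ — the set $F=\limsup_k\{|I_\alpha h_k|>2^{-k}\}$ has $C_\alpha(F;L^{p,\mu})=0$, and on each $\Omega\setminus\bigcup_{k\ge K}\{|I_\alpha h_k|>2^{-k}\}$ (sets exhausting $\Omega\setminus F$) the continuous functions $I_\alpha g_k$ converge uniformly to $I_\alpha f$. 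This is precisely the classical quasi-continuity Borel--Cantelli argument of \cite[Chapter 6]{AH1996} carried over to the Morrey setting, and it shows that $I_\alpha f$ coincides with a quasi-continuous function off $F$. Because $F$ has Lebesgue measure zero (apply Lemma \ref{l2}(i) with $d=n$), every $x\in\Omega\setminus F$ is a Lebesgue point of $I_\alpha f$ with locally bounded averages, so $\Sigma_1(I_\alpha f,\Omega)\subseteq F$ and therefore $C_\alpha(\Sigma_1(I_\alpha f,\Omega);L^{p,\mu}(\Omega))=0$.

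Finally, applying Lemma \ref{l2}(i) once more at Morrey level $\mu$ yields $\Lambda_d^{(\infty)}(\Sigma_1(I_\alpha f,\Omega))=0$ for every $d>\mu-\alpha p$; letting $\mu\searrow\lambda$ gives $\hbox{dim}_H(\Sigma_1(I_\alpha f,\Omega))\le\lambda-\alpha p$, which is the bound in (i) and collapses to $0$ in (ii) since there $\lambda=\alpha p$. The main obstacle is step two — transporting the classical quasi-continuity argument to $C_\alpha(\cdot;L^{p,\mu})$ — but the two ingredients it requires, countable subadditivity and the weak-type inequality, both fall straight out of Definition \ref{d2}, so no genuinely new machinery is needed. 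A secondary point worth flagging is that $\Sigma_1$ unites the Lebesgue-oscillation set $\mathsf S_1$ and the average-blow-up set $\mathsf T$; continuity of $I_\alpha f$ at points of $\Omega\setminus F$ combined with the boundedness of $\Omega$ rules out both simultaneously.
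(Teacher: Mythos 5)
The capacitary weak--type bound, the countable subadditivity of $C_\alpha(\cdot;L^{p,\mu}(\Omega))$, the Borel--Cantelli argument giving $C_\alpha(F;L^{p,\mu}(\Omega))=0$, and the passage to Hausdorff dimension via Lemma~\ref{l2} are all sound. The gap is the inclusion $\Sigma_1(I_\alpha f,\Omega)\subseteq F$. You deduce from quasi-continuity off $F$ together with $|F|=0$ that every $x_0\in\Omega\setminus F$ is a Lebesgue point of $I_\alpha f$; that inference is not valid. Quasi-continuity asserts that for $x_0$ outside $B_K=\bigcup_{k\ge K}\{|I_\alpha(f-g_k)|>2^{-k}\}$ --- a set of capacity at most $2^{1-K}$ but in general of \emph{positive} Lebesgue measure --- the pointwise values $I_\alpha g_k(x_0)$ converge to $I_\alpha f(x_0)$. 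It gives no control over $\fint_{B_r(x_0)}\big|I_\alpha f-\fint_{B_r(x_0)}I_\alpha f\big|$, because this average samples the set $B_r(x_0)\cap B_K$ on which the approximation is lost, and the ratio $|B_r(x_0)\cap B_K|/|B_r(x_0)|$ need not tend to zero as $r\to0$. That $F=\bigcap_K B_K$ itself has measure zero is beside the point: the oscillation averages see the nonvanishing sets $B_K$, not $F$. The same objection applies to the closing sentence about $\mathsf{T}$: continuity (or even finiteness) of $I_\alpha f$ at $x_0\notin F$ does not bound $\sup_{r}\big|\fint_{B_r(x_0)}I_\alpha f\big|$.

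The paper closes exactly this gap by passing from oscillations to maximal functions before invoking the capacity. One bounds $\limsup_{r\to0}\fint_{B_r(x_0)}\big|I_\alpha f-\fint_{B_r(x_0)}I_\alpha f\big|\lesssim\mathcal{M}\bigl(I_\alpha(|f-f_\epsilon|)\bigr)(x_0)$ (the $f_\epsilon$-part has continuous potential, so its oscillation vanishes), and then applies Hedberg's inequality $\mathcal{M}(I_\alpha g)\lesssim I_\alpha(\mathcal{M} g)$ to turn the right-hand side back into a Riesz potential of an $L^{p,\mu}$-function; only then is the capacitary weak--type estimate applicable, together with the boundedness of $\mathcal{M}$ on $L^{p,\mu}$ from \cite{CF1988}. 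Your Borel--Cantelli scheme can be repaired by running it on the sets $\{I_\alpha(\mathcal{M}|f-g_k|)>2^{-k}\}$ rather than $\{|I_\alpha(f-g_k)|>2^{-k}\}$; this maximal-function step is precisely what is hidden in the assertion ``every $x\in\Omega\setminus F$ is a Lebesgue point'' and cannot be bypassed.
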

\begin{proof} First of all, for $\epsilon>0$ let $f_\epsilon=\phi_\epsilon\ast f$ be of the $\epsilon$-mollifier of $f\in L^{p,\lambda}(\Omega)$ and $\mathcal{M}$ denote the Hardy-Littlewood maximal operator.

Next, let us treat $\mathsf{S}_1(I_\alpha f,\Omega)$. For $t>0$ set
$$
\mathsf{S}_1(I_\alpha f,\Omega,t)=\left\{x_0\in\Omega:\ \limsup_{r\to
0}\fint_{B_r(x_0)}\Big|I_\alpha f-\fint_{B_r(x_0)}I_\alpha
f\Big|>t\right\}.
$$
By Lemma \ref{l1}, we see $f\in L^{p,\mu}_0(\Omega)$ and then
\begin{eqnarray*}
t&\le&\limsup_{r\to 0}\fint_{B_r(x_0)}\big|I_\alpha f-\fint_{B_r(x_0)}I_\alpha f\big|\\
&\lesssim&\limsup_{r\to 0}\fint_{B_r(x_0)}\big|I_\alpha
(f-f_\epsilon)-\fint_{B_r(x_0)}I_\alpha(f-f_\epsilon)\big|\\
&&+\ \limsup_{r\to 0}\fint_{B_r(x_0)}\big|I_\alpha
(f_\epsilon)-\fint_{B_r(x_0)}I_\alpha(f_\epsilon)\big|\\
&\lesssim& \mathcal{M}\big(I_\alpha (|f-f_\epsilon|)\big)(x_0)\\
&\lesssim&I_\alpha\big(\mathcal{M}(|f-f_\epsilon|)\big)(x_0).
\end{eqnarray*}
By the definition of $C_\alpha\big(\cdot;L^{p,\mu}(\Omega)\big)$ and
the boundedness of $\mathcal{M}$ on $L^{p,\mu}$ (cf. \cite{CF1988}) with $p>1$
and $\mu>\lambda$, we get
$$
C_\alpha\big(\mathsf{S}_1(I_\alpha
f,\Omega,t);L^{p,\mu}(\Omega)\big)\lesssim
t^{-p}\|f-f_\epsilon\|_{L^{p,\mu}(\Omega)}^{p}\quad\forall\quad
\epsilon>0,
$$
whence finding (via letting $\epsilon\to 0$)
$$
C_\alpha\big(\mathsf{S}_1(I_\alpha
f,\Omega,t);L^{{p},\mu}(\Omega)\big)=0.
$$
Since $t>0$ is arbitrary, we obtain
$$
C_\alpha\big(\mathsf{S}_1(I_\alpha
f,\Omega);L^{{p},\mu}(\Omega)\big)=0.
$$
This, along with Lemma \ref{l2}(i), deduces
\begin{equation}\label{e+}
\Lambda_d^{(\infty)}\big(\mathsf{S}_1(I_\alpha
f,\Omega)\big)=0\quad\forall\quad d>\mu-\alpha p\quad\&\quad
0<q<\frac{p(\mu-\alpha p)}{d}.
\end{equation}
As a result, letting $\mu\to \lambda$, we find
$\hbox{dim}_H\big(\mathsf{S}(I_\alpha f,\Omega)\big)\le\lambda-\alpha p.$
In the last estimate, we have used $\lambda>\alpha p$. Nevertheless,
when $\lambda=\alpha p$, we still have (\ref{e+}) with
$\mu>\lambda$, and thereby reaching
$\hbox{dim}_H\big(\mathsf{S}(I_\alpha f,\Omega)\big)=0.$

Thirdly, we handle the case for $\mathsf{T}$. Set $d(x_0,\partial\Omega)$ be the distance of $x_0\in\Omega$ to the boundary $\partial\Omega$ of $\Omega$ and
$$
\mathsf{T}(I_\alpha f,\Omega,t)=\left\{x_0\in\Omega: \sup_{0<r<d(x_0,\partial\Omega)}\big|\fint_{B_r(x_0)}I_\alpha f\big|>t\right\}.
$$
Then by Lemma \ref{l1}, we get $f\in L^{p,\mu}_0(\Omega)\subset L^{p,\mu}(\Omega)$ and
\begin{eqnarray*}
t&<&\sup_{0<r<d(x_0,\partial\Omega)}\big|\fint_{B_r(x_0)}I_\alpha f\big|\\
&\lesssim & \sup_{0<r<d(x_0,\partial\Omega)}\fint_{B_r(x_0)}|I_\alpha(f-f_\epsilon)|+\sup_{0<r<d(x_0,\partial\Omega)} \fint_{B_r(x_0)}|I_\alpha(f_\epsilon)|\\
&\lesssim& \mathcal{M}(I_\alpha(|f-f_\epsilon|))(x_0)+\mathcal{M}(I_\alpha(f_\epsilon))(x_0)\\
&\lesssim& I_\alpha (\mathcal{M}(|f-f_\epsilon|))(x_0)+I_\alpha(\mathcal{M}(f_\epsilon))(x_0).
\end{eqnarray*}
Consequently, there is a constant $c>0$ such that
$$
\mathsf{T}(I_\alpha f,\Omega,t)\subseteq\mathsf{T}_1(I_\alpha f,\Omega,t)\cup\mathsf{T}_2(I_\alpha f,\Omega,t),
$$
where
$$
\begin{cases}
\mathsf{T}_1(I_\alpha f,\Omega,t)=\{x_0\in\Omega: I_\alpha (\mathcal{M}(|f-f_\epsilon|))(x_0)\ge\frac{ct}{2}\}\\
\mathsf{T}_2(I_\alpha f,\Omega,t)=\{x_0\in\Omega: I_\alpha(\mathcal{M}(f_\epsilon))(x_0)\ge\frac{ct}{2}\}.
\end{cases}
$$
A combined use of the definition of $C_\alpha(\cdot;L^{p,\mu}(\Omega))$, the boundedness of $\mathcal{M}$ on $L^{p,\mu}$ and $L^{p,\lambda}$ and the easily-verified uniform boundedness of $f\mapsto f_\epsilon$ on $L^{p,\mu}(\Omega)$ (cf. \cite{Z1986}) gives
\begin{eqnarray*}
&&C_\alpha(\mathsf{T}(I_\alpha f,\Omega,t); L^{p,\mu}(\Omega))\\
&&\lesssim C_\alpha(\mathsf{T}_1(I_\alpha f,\Omega,t); L^{p,\mu}(\Omega))+C_\alpha(\mathsf{T}_2(I_\alpha f,\Omega,t); L^{p,\mu}(\Omega))\\
&&\lesssim t^{-p}\|f-f_\epsilon\|^p_{L^{p,\mu}(\Omega)}+t^{-p}\|f_\epsilon\|_{L^{p,\mu}(\Omega)}^p\\
&&\lesssim t^{-p}\|f-f_\epsilon\|^p_{L^{p,\mu}(\Omega)}+t^{-p}\|f\|_{L^{p,\mu}(\Omega)}^p.
\end{eqnarray*}
Since $\lim_{\epsilon\to 0}\|f-f_\epsilon\|^p_{L^{p,\mu}(\Omega)}=0$, letting $\epsilon\to 0$ and then $t\to\infty$, one derives
$$
C_\alpha(\mathsf{T}(I_\alpha f,\Omega);L^{p,\mu}(\Omega))=0.
$$
This plus Lemma \ref{l2} yields
$$
\Lambda_d^{(\infty)}(\mathsf{T}(I_\alpha f,\Omega))=0\quad\forall\quad d>\mu-\alpha p>\lambda-\alpha p,
$$
whence giving
$$
\hbox{dim}_H(\mathsf{T}(I_\alpha f,\Omega))\le\lambda-\alpha p.
$$

Now, the above estimates yield the desired results for $\Sigma_1=\mathsf{S}_1\cup\mathsf{T}$.
\end{proof}

\subsection{Proof of Theorem \ref{t1}} The part on $q=\infty$ follows readily from the argument for the case $np/(n-mp)<q<\infty$. So, it is enough to handle this last case.

The result $|D^m u|\in L^{p,\lambda}(\Omega)$ with $\lambda=(m+n/q)p<n$ follows from the estimate below:
\begin{equation}\label{e33}
r^{mp}\int_{B_{r/2}(x_0)} \left|\left(\frac{\partial}{\partial
x}\right)^{m} u\right|^{p}\,\lesssim\int_{B_r(x_0)}|u|^p \ \
\forall\ \ x_0 \in \Omega\ \ \&\  0<r<d(x_0,\partial\Omega).
\end{equation}
To verify (\ref{e33}), we just use the test function $\varphi =
\eta^{mp}u$, where $\eta(x)=\psi\Big(\frac{x - x_0}{r}\Big)$ for which
$$
\psi\in C_0^\infty(\mathbb R^n)\quad\&\quad \psi(x)=\begin{cases} 1, & x\in B_{r/2}(x_0);\\
0, & x\in \mathbb R^n\setminus B_r(x_0).
\end{cases}
$$
This then gives

\begin{eqnarray*}
\int \left|\left(\frac{\partial}{\partial
x}\right)^{m}u\right|^{p}\eta^{mp}
   &\lesssim& \int \left|\left(\frac{\partial}{\partial
x}\right)^{m}u\right|^{p-1}\eta^{m(p-1)} \,
\left|\left(\frac{\partial}{\partial x}\right)^{m -
k}u\right|r^{-(m-k)}\\
&&+\ \int \left|\left(\frac{\partial}{\partial
x}\right)^{m}u\right|^{p-1}\eta^{m(p-1)}\left|\left(\frac{\partial}{\partial
x}\right)^{j - l}u\right|r^{-(j-l)}
\end{eqnarray*}
for $0 < k \leq m$ and $j < m$ with $0 \leq l \leq j$.  Then via the Young
inequality
$$
ab\le \frac{\epsilon a^{\theta}}{\theta}+\frac{\epsilon^{\frac{1}{1-\theta}}b^{\theta'}}
{\theta'}\quad{\forall}\quad a,\ b,\
\epsilon>0,\ \theta>1,\ \theta'=\frac{\theta}{\theta-1},
$$
we get
\begin{eqnarray*}
  \int \left|\left(\frac{\partial}{\partial x}\right)^{m}u\right|^p \eta^{mp}
  &\leq&\int_{B_r(x_0)}\left|\left(\frac{\partial}{\partial
x}\right)^{m-k}u\right|^{p} r^{-(m-k)p}\\
&&+\ \int_{B_r(x_0)}\left|\left(\frac{\partial}{\partial
x}\right)^{j - l}u\right|^{p}r^{-(j-l)p}.
\end{eqnarray*}
Now, applying the Gagliardo-Nirenberg inequality (see e.g.
\cite{F1969}) gives
$$
\int \left|\left(\frac{\partial}{\partial x}\right)^{m}u\right|^{p}
\eta^{mp} \lesssim r^{-mp}\int_{B_r(x_0)}|u|^{p}.
$$

Next, we prove
\begin{equation}\label{e34}
\hbox{dim}_H\big(\Sigma_{\hat{p}}(u,\Omega)\big)\le np/q.
\end{equation}
To reach (\ref{e34}), let $f=|D^m u|$ and consider two cases below.

{\it Case 1}:\ $\hat{p}=p$. Firstly, we establish the following estimate for $\mu\in (\lambda,n]$ and $t>0$:
\begin{equation}\label{e35}
  C_m\big(\{x_0\in\Omega;\big(\mathcal{M}(I_m f)^p(x_0)\big)^{1/p}> t\}; \;L^{p,\mu}(\Omega)\big) \:\lesssim t^{-p}\|f\|_{L^{p,\mu}(\Omega)}^{p}.
  \end{equation}
In fact, observe that
$$
\big(\mathcal{M}(I_{m}f)^p\big)^{1/p} \lesssim I_m (\mathcal{M}f^{p})^{1/p}
$$
 and so that the left side of (\ref{e35}) does not exceed $t^{-p}\|(\mathcal{M}f^{p})^{1/p}\|_{L^{p, \mu}(\Omega)}^{p}$
by definition of the Morrey capacity.  But clearly
$$
  \left( r^\mu \fint_{B_r(x_0)} \mathcal{M} f^p \right)
  \lesssim
  \left( r^{\mu(p+\epsilon)/p} \fint_{B_r(x_0)} (\mathcal{M}f)^{\frac{p + \epsilon}{p}}\right)^{\frac{p}{p + \epsilon}}
$$
holds for small number $\epsilon>0$. So
$$
\|\mathcal{M} f^{p} \|_{L^{p,\mu}(\Omega)} \lesssim \|\mathcal{M} f^{p}
\|_{L^{(p+\epsilon)/p, \mu(p+\epsilon)/p}(\Omega)}
\lesssim\|f\|_{L^{p+\epsilon, \mu(p + \epsilon)/p}(\Omega)}
$$
follows from \cite{CF1988}: the maximal function is a bounded operator
on the Morrey spaces $L^{(p+\epsilon)/p, \mu(p+\epsilon)/p}$. But
applying the reversed H\"older estimates for $f$ from \cite{ME1975}, we get
$$
\|f\|_{L^{p+\epsilon, \mu(p + \epsilon)/p}(\Omega)}
\lesssim\|f\|_{L^{p, \mu}(\Omega)}.
$$
Thus the desired result (\ref{e35}) follows.

Secondly, we need the fact that any $h\in C_0^\infty(\mathbb R^n)$ can be represented as
(\cite[Lemma 2]{AAnn}):
\begin{equation}\label{e36}
h(x)=\begin{cases}
(-1)^\frac{k}{2}\Big(\frac{\omega_{n-1}\beta_0}{n}\Big)^\frac{2-n}{n}\int_{\mathbb
R^n}\frac{\nabla^k h(y)}{|x-y|^{n-k}}, & k=2,4, 6,...\\
(-1)^\frac{k-1}{2}\Big(\frac{\omega_{n-1}\beta_0}{n}\Big)^\frac{2-n}{n}\int_{\mathbb
R^n}\frac{(x-y)\cdot\nabla^k h(y)}{|x-y|^{n-k+1}}, & k=1,3,5,...,
\end{cases}
\end{equation}
where $\omega_{n-1}=2\pi^{n/2}/\Gamma(n/2)$ is the volume of the
boundary $\mathbb S^{n-1}$ of the unit ball $\mathbb B^n$ of
$\mathbb R^n$, $\Gamma(\cdot)$ is the usual Gamma function,
$$
\nabla^k h=\begin{cases}
(-\Delta)^\frac{k}{2}h, & k=2,4,6,...\\
\nabla(-\Delta)^\frac{k-1}{2}h, & k=1,3,5,...,
\end{cases}
$$
and
$$
\beta_0=\beta_0(k,n)=
\begin{cases}
\frac{n}{\omega_{n-1}}\Big(\frac{\pi^\frac{n}{2}2^k\Gamma\big(\frac{k+1}{2}\big)}{\Gamma\big(\frac{n-k+1}{2}\big)}\Big)^\frac{n}{n-2},
& k=2,4,6,...\\
\frac{n}{\omega_{n-1}}\Big(\frac{\pi^\frac{n}{2}2^k\Gamma\big(\frac{k}{2}\big)}{\Gamma\big(\frac{n-k}{2}\big)}\Big)^\frac{n}{n-2},
& k=1,3,5,....
\end{cases}
$$
Now that each component $u^l$ of $u$ is in $W^{m,p}(\Omega)\cap L^{q}(\Omega)$. So the representation formula (\ref{e36}) for the even
orders can extend to $u^l$ via the density of
$C^\infty_0(\Omega)$ in $W^{m,p}(\Omega)$:
\begin{equation*}
u^l(x)=\begin{cases}
(-1)^\frac{m}{2}\Big(\frac{\omega_{n-1}\beta_0}{n}\Big)^\frac{2-n}{n}I_m(f_{m,l}), & m=2,4,6,...\\
(-1)^\frac{m-1}{2}\Big(\frac{\omega_{n-1}\beta_0}{n}\Big)^\frac{2-n}{n}I_m(f_{m,l}),
& m=1,3,5,....
\end{cases}
\end{equation*}
where
\begin{equation*}
f_{m,l}(y)=\begin{cases}\nabla^m u^l(y), & m=2,4,...\\
|x-y|^{-1}(x-y)\cdot\nabla^m u^l(y), & m=1,3,....\\
\end{cases}
\end{equation*}
For simplicity, set $f=f_{m,l}$ and $g=I_m(f_{m,l})$. An application of $|D^m u|\in L^{p,\lambda}(\Omega)$ implies $f_{m,l}\in
L^{p,\lambda}(\Omega)$. Now, we use $0<\epsilon$-mollifier $f_{\epsilon}=\phi_\epsilon\ast f$ of $f$ to obtain that if $t>0$ then
\begin{eqnarray*}
t^\frac1p&<&\liminf_{r\to 0} \left(\fint_{B_r(x_0)}\Big|g -\fint_{B_r(x_0)}g\Big|^p\right)^\frac1p\\
&\lesssim&\Big(\mathcal{M}\big(I_{m}(f-f_{\epsilon})\big)^p(x_0)\Big)^\frac1p\\
&\lesssim&I_m\big(\mathcal{M}(|f-f_\epsilon|^p)\big)^\frac1p.
\end{eqnarray*}
According to the definition of $C_m(\cdot; L^{p,\mu}(\Omega))$ and (\ref{e35}), we have that if
$$
\mathsf{S}_p(g,\Omega,t)=\left\{x\in\Omega:\ \limsup_{r\to
0}\fint_{B_r(x_0)}\Big|g-\fint_{B_r(x_0)}g\Big|^p>t\right\}.
$$
then
$$
C_m\big(\mathsf{S}_p(g,\Omega,t);L^{p,\mu}(\Omega)\big)\lesssim t^{-p}\|f-f_\epsilon\|_{L^{p,\mu}(\Omega)}^p.
$$
This last estimate, along with (\ref{e32}) of Lemma \ref{l1} ensuring
$$
\lim_{\epsilon\to 0}\|f-f_{\epsilon}\|_{L^{p,
\mu}(\Omega)}=0\quad\forall\quad\mu\in (\lambda,n],
$$
yields
\begin{equation}\label{e37}
  C_m\big(\mathsf{S}_p(g,\Omega); L^{p,\mu}(\Omega)\big)=0\quad{\forall}\quad \mu\in (p(m+n/q),n].
  \end{equation}

Thirdly, the $\mathsf{T}$-part of Theorem \ref{t4} is used to give
\begin{equation}\label{e38}
  C_m\big(\mathsf{T}(g,\Omega); L^{p,\mu}(\Omega)\big)=0\quad{\forall}\quad \mu\in (p(m+n/q),n].
  \end{equation}
Now, putting (\ref{e37}) and (\ref{e38}) together, we find
\begin{equation*}
  C_m\big(\Sigma_p(g,\Omega); L^{p,\mu}(\Omega)\big)=0\quad{\forall}\quad \mu\in (p(m+n/q),n].
  \end{equation*}
This plus Lemma \ref{l2} yields
$$
\Lambda_d^{(\infty)}(\Sigma_p(g,\Omega))=0\quad\forall\quad d>\epsilon+{np}/{q}\quad\&\quad\epsilon\in (0,1)
$$
thereby deriving $\hbox{dim}_H\big(\Sigma_p(g,\Omega)\big)\le np/q$, and so (\ref{e34}).

{\it Case 2}:\ $\hat{p}=1$. Under this assumption, (\ref{e34}) follows from the above argument and Theorem \ref{t4} with $\lambda=mp+np/q$, $\alpha=m$ and $p=p$.

\begin{remark} The second derivative estimates in the case $m=1$ follow \cite[Theorem 8.15]{BF2002}. Using the finite difference operator $\Delta_2 \phi(x)=\phi(x + z)-\phi(x)$, we
can prove that the second derivatives of the solution in Theorem \ref{t1} lie in the Morrey space $L^{p, 2m + np/q},$ at least when the
coefficients of our pde have derivatives, i.e., $a_{i x_j}^{k} =
\frac{\partial a_{i}^{k}}{\partial x_j} \in L^{2} $ and $
  a_{ij}^{kl} = \frac{\partial a_{i}^{k}}{\partial p_{j}^{l}}
  $ satisfies ellipticity and are bounded; see \cite{BF2002}. However, this Morrey space estimate does not decrease locally the size of the singular set; it only increases the dimension of the
underlying space $\mathbb R^{n}$ that would be needed to admit
singularities (non-H\"older solutions). Now we would need at least
$n\geq 5,$ to get such a solution.
\end{remark}

\subsection{Proof of Theorem \ref{t2}} In what follows, suppose $u$ is a $W^{1,2}\cap L^\infty$ minimizer of $\mathcal{J}(u,\Omega)$. Clearly, such $u$ is a $W^{1,2}\cap L^\infty$ minimizer of $\mathcal{J}(u,B)$ for any ball $B=B_r(x_0)\Subset\Omega$. For each $t\in (0,1)$ and $\tau>0$ let
$$
\Phi(t,\tau,r,x_0)=t^{2-n}e^{\tau t^\beta}\int_{B_{tr}(x_0)}A^{kl}_{ij}(x,u)u_{x_k}^iu_{x_l}^j.
$$
According to \cite{GG1984}, there is a constant $\tau$ (independent of $x_0$ and $t,r$) such that $t\mapsto\Phi(t,\tau,r,x_0)$ is an increasing function on the interval $(0,1)$. As a consequence, one has
$$
\Phi(t_1,\tau,r,x_0)\le \Phi(t_2,\tau,r,x_0)\quad\hbox{for}\quad 0<t_1<t_2<1.
$$
This, along with the elliptic condition on $A^{kl}_{ij}$, implies $|\nabla u|\in L^{2,2}(B)$. Of course, the argument for Theorem \ref{t1} derives $\hbox{dim}_H(\Sigma_2(u,B))=0$.

Next, suppose $\Sigma_2(u,\Omega)$ (which equals $\mathsf{S}_2(u,\Omega)$ since $u$ is bounded) is contained properly in a ball $B\subset\Omega$. Then $\hbox{dim}_H(\Sigma_2(u,\Omega))=0$ follows from the above argument. Without loss of generality we may assume that $B$ is just the unit ball $\mathbb B^n$. Since $t\mapsto\Phi(t,\tau,1,0)=:\Psi(t)$ is an increasing function on $(0,1)$, according to \cite[(15)]{GG1984} one has
\begin{equation}\label{e39}
\int_{\partial\mathbb B^n}|u(rx)-u(sx)|^2\,d{H}_{n-1}(x)\lesssim \frac{\Psi(r)-\Psi(s)}{\big(\ln\frac{r}{s}\big)^{-1}}\quad \ \forall\ 0<s<r<1.
\end{equation}
In the above and below, $dH_{n-1}$ stands for $n-1$ dimensional Hausdorff measure.

If $\{x_\rho\}_{\rho=1}^\infty$ is a sequence of points in $\Sigma_2(u,\Omega)$, then this sequence has a subsequence, still denoted by $\{x_\rho\}$, that converges to a point $x_0\in\mathbb B^n$ thanks to $\Sigma_2(u,\Omega)\Subset\mathbb B^n$. For simplicity, set $x_0$ be just the center of $\mathbb B^n$, and $r_\rho=2|x_\rho|<1$. Then, $\hat{u}(x)=u(r_\rho x)$ is a local minimizer of
$$
\mathcal{J}_\rho(\hat{u},\mathbb B^n)=\int_{\mathbb B^n} \hat{A}^{kl}_{ij}(x,\hat{u}) \hat{u}^i_{x_k} \hat{u}^j_{x_l}\quad\hbox{with}\quad \hat{A}^{kl}_{ij}(x,\hat{u})=A^{kl}_{ij}(r_\rho x,\hat{u}).
$$
Referring to the argument on \cite[Page 52]{GG1984}, $\{u(r_\rho x)\}$ converges weakly in $L^2(\mathbb B^n)$ to a map $v$ which is a local minimizer of
$$
\mathcal{J}_0(v,\mathbb B^n)=\int_{\mathbb B^n} {A}^{kl}_{ij}(0,v) {v}^i_{x_k} {v}^j_{x_l}.
$$
Moreover, there is a point $y_0$ with $|y_0|=1/2$ such that $y_0$ is a singular point of $v$.
Note that (\ref{e39}) is satisfied by $\hat{u}$, i.e.,
\begin{equation}\label{e310}
\int_{\partial\mathbb B^n}|\hat{u}(rx)-\hat{u}(sx)|^2\,d{H}_{n-1}(x)\lesssim \frac{\Psi({rr_\rho})-\Psi({sr_\rho})}{\big(\ln \frac{r}{s}\big)^{-1}}\quad\forall\ 0<s<r<1.
\end{equation}
Letting $\rho\to\infty$ in (\ref{e310}) produces
$$
\int_{\partial\mathbb B^n}|v(rx)-v(sx)|^2\,d{H}_{n-1}(x)=0.
$$
This indicates that $v$ is constant along the segment from the center of $\mathbb B^n$ to any point in $\partial\mathbb B^n$. Because $y_0$ is a singular point of $v$, one concludes that the segment between the center of $\mathbb B^n$ and $y_0$ is a subset of $\Sigma_2(v,\mathbb B^n)$, and so that $\hbox{dim}_H(\Sigma_2(v,\mathbb B^n))>0$, contradicting  $\hbox{dim}_H(\Sigma_2(v,\mathbb B^n))=0$ which follows from
$$
0\le\hbox{dim}_H(\Sigma_2(u,\mathbb B^n))\le\hbox{dim}_H(\Sigma_2(u,\Omega))=0.
$$
Therefore, $\Sigma_2(u,\Omega)$ consists of at most isolated points.

The part about the minimizing harmonic maps into a compact Riemannian manifold with metric $g$ is an immediate consequence of the above argument in that
$$
r\mapsto r^{2}\fint_{B_r(x_0)\Subset\Omega}|\nabla u|_g^2
$$
is increasing; see e.g. \cite{LS1996, LW2008}.

\begin{remark} In his survey paper \cite{LS1996}, Simon used the traditional ``blow-up" method
to show that if $u$ is a minimizing harmonic map from $\Omega$
(which is allowed to be unbounded) into $\mathbb S^2$ then the singular set of $u$ (possibly being a global singular set) has its Hausdorff dimension at most $n-3$. Especially, this singular set is
countably $(n-3)$-rectifiable. But note that rectifiability doesn't
include sets of Hausdorff measure zero, so one could in Simon's
situation have sets of fractional dimension, which by Theorem \ref{t2} one can not have.
\end{remark}

\subsection{Proof of Theorem \ref{t3}} The proof uses Simon's characterization of the so-called tangent maps associated with each point $y\in\hbox{sing}(u,\mathbb B^n)$; see also \cite{LS1996}.

Each tangent map can be found by passing to the limit in the energy norm (for some possible subsequence) of $\rho \to 0$ in
$u(y+\rho z)\to\phi_y(z)$. And then, it turns out, $\phi_y:\mathbb R^n\to\mathbb{S}^{m-1}$. Also, upon rotating $\phi_y$ one can write, as a limit,
\begin{equation} \notag
\phi_0 \! \left(\frac{\hat{x}}{|\hat{x}|}\right), \, \hat{x} \in \mathbb R^d, \, \text{for some $d \geq 3$}.
\end{equation}
Here $x = (\hat{x}, \bar{x})$, $\bar{x} \in \mathbb R^{n-d}$. $\phi_0(\hat{\xi})$ is smooth on $|\hat{\xi}| \leq 1$. Upon considering
$$
I_\rho(z,y)=\int |z-x|^{1-n} \, |D(u(y+\rho x))| \, dx,
$$
we find two ways to evaluate the limit of $I_\rho(z,y)$ as $\rho\to 0$: on the one hand,
\begin{eqnarray*}
\lim_{\rho\to 0}I_\rho(z,y)&=& \lim_{\rho\to 0}\int |z-x|^{1-n} \, \rho \, |(Du)(y + \rho x)| \, dx\\
&=& \lim_{\rho\to 0}\int |\rho z - w + y|^{1-n} \, |(Du)(w)| \, dw=I_1(|Du|)(y);\notag
\end{eqnarray*}
on the other hand,
\begin{eqnarray*}
\lim_{\rho\to 0}I_\rho(z,y)&=&\int \int_{|\hat{x}| \leq 1} \left(|\hat{z} - \hat{x}|^2 + |\bar{z} - \bar{x}|^2\right)^{(1-n)/2} \, \left|D\phi_0 \left(\frac{\hat{x}}{|\hat{x}|}\right)\right| \, d\hat{x} \, d\bar{x}\\
&=& c \int_{|\hat{x}| \leq 1} |\hat{z} - \hat{x}|^{1-d} \, \left|(D\phi_0)\left(\frac{\hat{x}}{|\hat{x}|}\right)\right| |\hat{x}|^{-1} \, d\hat{x}
\end{eqnarray*}
with $c$ being a constant. Thus, $I_1(|Du|)(y)$ diverges at $\hat{z}=0$ which corresponds to $y \in\hbox{sing} (u,\mathbb B^n)$. We can clearly repeat this for any $y\in \hbox{sing}(u,\mathbb B^n)$.  Thus (\ref{eqF}) holds because $I_1(|Du|)$ is smooth otherwise, due to the known smoothness of $|Du|$ off the $\hbox{sing}(u,\mathbb B^n)$.

\begin{remark} In accordance with \cite[page 105]{G1983} and \cite[Corollary 2.2.8]{LW2008} one has that if $u$ is a $W^{1,2}$-minimizing harmonic map from $\mathbb B^n$ into $\mathbb S^{m-1}$ then
$$
\mathsf{S}_2(u,\mathbb B^n)=\hbox{sing}(u,\mathbb B^n)=\left\{x_0\in\mathbb B^n:\ \lim_{r\to 0}r^2\fint_{B_r(x_0)\Subset\mathbb B^n}|\nabla u|^2>0\right\}.
$$
On the other hand, for $n\ge 4$ let $u(x,y)=x/|x|: \mathbb
R^3\times\mathbb R^{n-3}\mapsto \mathbb S^2$. According to
\cite[Page 16]{LW2008}, this is a minimizing harmonic map. It is
not difficult to see that as a global singular set,
$$
\hbox{sing}(u,\mathbb R^n)=\{0\}\times \mathbb
R^{n-3}=\mathsf{S}_2(u,\mathbb R^n)\ \ \hbox{where}\ \ I_1(|\nabla
u|)(x,y)=\infty,
$$
and so $\hbox{dim}_H\big(\hbox{sing}(u,\mathbb R^n)\big)=n-3.$
It is worth noticing that one cannot get a local singular set of the
type $\{0\}\times B_R(0)$ for some ball $B_R(0)$ in $\mathbb
R^{n-3}$ by cutting off $u(x,y)$ and then passing to a limit. In
fact, consider
$$
u_j(x,y)=\Big(\frac{x}{|x|}\Big)\phi_j(y)\quad\forall\ \
(x,y)\in\mathbb R^3\times\mathbb R^{n-3}\quad\&\quad j=1,2,3,...,
$$
where
$$
\phi_j(y)=\phi_j(|y|)=\begin{cases} 1\quad &\quad \hbox{for}\quad |y|\le 1-\frac1j\\
\hbox{linear}\quad &\quad \hbox{for}\quad 1-\frac1j\le |y|\le 1\\
0\quad &\quad \hbox{for}\quad |y|\ge 1.
\end{cases}
$$
Of course, this function $\phi_j$ is only Lipchitz, but can be made
better if needed -- the conclusion is the same. Since $|\nabla
\phi_j|\approx j$, one concludes
$$
\int_{x\in\mathbb B^3}\int_{y\in\mathbb B^{n-3}}|\nabla
u_j(x,y)|^2\approx \int_{y\in\mathbb B^{n-3}}\int_{x\in\mathbb
B^3}\big(|x|^{-2}+|\nabla\phi_j(y)|^2\big)\to\infty\ \ \hbox{as}\ \
j\to\infty.
$$
Note that $\lim_{j\to\infty}u_j(x,y)$ ought to be $u(x,y)$ (which is
a minimizing harmonic map) on $\mathbb B^3\times\mathbb B^{n-3}$, the
approximation of the constant function is no good in $W^{1,2}$, and
$u_j$ is not a minimizing harmonic map. So, it is impossible to ``bite off" a
piece of a global singular set and to get a local singular set.
\end{remark}

\end{document}